\DeclareSymbolFont{largesymbols}{OMX}{yhex}{m}{n}
\DeclareMathAccent{\widehat}{\mathord}{largesymbols}{"62}
\newcommand{\F}{{\mathbb{F}}}
\newcommand{\Q}{{\mathbb{Q}}}
\newcommand{\Z}{{\mathbb{Z}}}
\newcommand{\Aut}{\textnormal{Aut}}
\newcommand{\Gal}{\textnormal{Gal}}
\newcommand{\Cen}{\textnormal{Cen}}
\newcommand{\tr}{{\textnormal{tr}}}
\newcommand{\Ker}{{\textnormal{Ker\,}}}
\newcommand{\supp}{{\textnormal{supp}}}
\newtheorem{theorem}{Theorem}[section]
\newtheorem{lemma}[theorem]{Lemma}
\newtheorem{corollary}[theorem]{Corollary}
\newtheorem{remark}[theorem]{Remark}
\title[Finite group algebras of nilpotent groups]{Finite group algebras of nilpotent groups: a complete set of orthogonal primitive idempotents}
\author{Inneke Van Gelder}
\email{ivgelder@vub.ac.be}
\address{Department of Mathematics, Vrije Universiteit Brussel,
Pleinlaan 2, 1050 Brussels, Belgium}
\author{Gabriela Olteanu} \email{gabriela.olteanu@econ.ubbcluj.ro}
\address{Department of Statistics-Forecasts-Mathematics, Babe\c s-Bolyai University,
Str. T. Mihali 58-60, 400591 Cluj-Napoca, Romania}
\begin{document}
\maketitle

\begin{abstract}
We provide an explicit construction for a complete set of orthogonal primitive idempotents of finite group algebras over nilpotent groups.
Furthermore, we give a complete set of matrix units in each simple epimorphic image of a finite group algebra of a nilpotent group.
\end{abstract}

\section{Introduction}
The group algebra $F G$ of a finite group $G$ over a field $F$ is the ring theoretical tool that links finite group theory and ring theory.
If the order of $G$ is invertible in $F$, then $F G$ is a semisimple algebra and hence is a direct sum of matrices over division rings,
called the simple components in the Wedderburn decomposition of $F G$. A concrete realization of the Wedderburn decomposition is of interest in
many topics. The Wedderburn decomposition shows its importance in the investigations of the group of units of a group ring,
see for example \cite{Jespers2010}, and of the group of automorphisms of a group ring, see for example \cite{Olivieri2006}.
Further, computing the primitive idempotents of a group ring gives control of the representations in the base field inside the group ring.
Moreover, a complete set of orthogonal primitive idempotents gives us enough information to compute all one-sided ideals of the group ring up to
conjugation.
For more information on group rings, the interested reader is referred to the books of Passman and Sehgal \cite{Passman1977, Sehgal1978, Sehgal1993}.

Finite group algebras and their Wedderburn decomposition are not
only of interest in pure algebra, they also have applications in
coding theory. Cyclic codes can be realized as ideals of group
algebras over cyclic groups \cite{HB1998} and many other important
codes appear as ideals of noncyclic group algebras
\cite{HB1998,charpin,sabin}. In particular, the Wedderburn
decomposition is used to compute idempotent generators of minimal
abelian codes \cite{Ferraz2007}. Using a complete set of
orthogonal primitive idempotents, one would be able to construct
all left $G$-codes, i.e. left ideals of the finite group algebra
$F G$, which is a much richer class then the (two-sided)
$G$-codes.

In this paper, we are interested in the computation of a complete
set of orthogonal primitive idempotents in a semisimple finite group
algebra $\F G$, for $\F$ a finite field and $G$ a nilpotent group.
This problem is related to the Wedderburn decomposition of $\F G$.
The first step for the computation of the Wedderburn components is
to determine the primitive central idempotents of $\F G$. The
classical method to do this deals with characters of the finite
group $G$. In 2003, \cite{Jespers2003} gave a character-free
method to compute the primitive central idempotents of the
rational group algebra $\Q G$ for a nilpotent group. Later,
\cite{Olivieri2004} and \cite{Broche2007} extended and improved
this method for more classes of groups over both the rationals and
finite fields. Furthermore, the Wedderburn component associated to
a primitive central idempotent is described for a large class of
groups, including the nilpotent groups. This is a second step
toward a detailed understanding of the Wedderburn decomposition of
$\F G$. These results are summarized in Section
\ref{Preliminaries}. Further, \cite{Jespers2010} describes a
complete set of matrix units (in particular, a complete set of
orthogonal primitive idempotents) of each Wedderburn component of
the rational group algebra $\Q G$ of a nilpotent group $G$, a
third step in the description of $\Q G$. In Section
\ref{Primitive idempotents} we prove similar results for the
finite group algebra $\F G$. These results can be used to
implement the computation of a complete set of orthogonal
primitive idempotents for finite group algebras over nilpotent
groups.

\section{Preliminaries}\label{Preliminaries}
Let $F$ be an arbitrary field and $G$ an arbitrary finite group such that $F G$ is semisimple.
The notation $H\leq G$ (resp. $H\unlhd G$) means that $H$ is a
subgroup (resp. normal subgroup) of $G$. For $H\leq G$, $g\in G$
and $h\in H$, we define $H^g=g^{-1}Hg$ and $h^g=g^{-1}hg$.
Analogously, for $\alpha \in F G$ and $g\in G$,
$\alpha^g=g^{-1}\alpha g$. For $H\leq G$, $N_G(H)$ denotes the
normalizer of $H$ in $G$ and we set
$\widetilde{H}=|H|^{-1}\sum_{h\in H} h$, an idempotent of $F G$,
and if $H=\langle g\rangle$ then we simply write $\widetilde{g}$
for $\widetilde{\langle g\rangle}$.

The classical method for computing primitive central idempotents
in a semisimple group algebra $F G$ involves characters of the
group $G$. All the characters of any finite group are assumed to
be characters in $\overline{F}$, a fixed algebraic closure of the
field $F$. For an irreducible character $\chi$ of $G$,
$e(\chi)=\frac{\chi(1)}{|G|}\sum_{g\in G}\chi(g^{-1})g$ is the
primitive central idempotent of $\overline{F}G$ associated to
$\chi$ and $e_{F}(\chi)$ is the only primitive central idempotent
$e$ of $F G$ such that $\chi(e)\neq 0$. The field of character
values of $\chi$ over $F$ is defined as $F(\chi)=F(\chi(g) : g\in
G)$, that is the field extension of $F$ generated over $F$ by the
image of $\chi$. The automorphism group $\Aut(\overline{F})$ acts
on $\overline{F}G$ by acting on the coefficients, that is
$\sigma\sum_{g\in G} a_gg=\sum_{g\in G}\sigma(a_g)g$, for
$\sigma\in\Aut(\overline{F})$ and $a_g\in\overline{F}$. Following
\cite{Yamada1973}, we know that
$e_{F}(\chi)=\sum_{\sigma\in\Gal(F(\chi)/F)}\sigma e(\chi)$.

New methods for the computation of the primitive central
idempotents in a group algebra do not involve characters. The main
ingredient in this theory is the following element, introduced in
\cite{Olivieri2004}. If $K \unlhd H\leq G$, then let
$\varepsilon(H,K)$ be the element of $\Q H\subseteq \Q G$ defined
as
\begin{eqnarray*}
\varepsilon(H,K)&=&
\left\{\begin{array}{ll}
\widetilde{K} & \mbox{if } H=K, \\
\prod_{M/K\in \mathcal{M}(H/K)}(\widetilde{K}-\widetilde{M}) & \mbox{if } H\neq K,
\end{array}\right.
\end{eqnarray*}
where $\mathcal{M}(H/K)$ denotes the set of minimal normal
non-trivial subgroups of $H/K$. Furthermore, $e(G,H,K)$ denotes
the sum of the different $G$-conjugates of $\varepsilon(H,K)$. By
\cite[Theorem 4.4]{Olivieri2004}, the elements $\varepsilon(H,K)$
are the building blocks for the primitive central idempotents of
$\Q G$ for abelian-by-supersolvable groups $G$.

In this paper, we focus on finite fields. First, we introduce some
notations and results from \cite{Broche2007}. Let $\F=\F_{q^m}$
denote a finite field of characteristic $q$ with $q^m$ elements,
for $q$ a prime and $m$ a positive integer, and $G$ a finite group
of order $n$ such that $\F G$ is semisimple, that is $(q,n)=1$.
Throughout the paper, we fix an algebraic closure of $\F$, denoted
by $\overline{\F}$. For every positive integer $k$ coprime with
$q$, $\xi_k$ denotes a primitive $k$th root of unity in
$\overline{\F}$ and $o_k$ denotes the multiplicative order of
$q^m$ modulo $k$. Recall that $\F(\xi_k)\simeq\F_{q^{mo_k}}$, the
field of order $q^{mo_k}$.

Let $\mathcal{Q}$ denote the subgroup of $\Z_n^*$, the group of
units of the ring $\Z_n$, generated by the class of $q^m$ and
consider $\mathcal{Q}$ acting on $G$ by $s\cdot g=g^s$. The
$q^m$-cyclotomic classes of $G$ are the orbits of $G$ under the
action of $\mathcal{Q}$ on $G$. Let $G^*$ be the group of
irreducible characters in $\overline{\F}$ of $G$. Now let
$\mathcal{C}(G)$ denote the set of $q^m$-cyclotomic classes of
$G^*$, which consist of linear faithful characters of $G$.

Let $K\unlhd H\leq G$ be such that $H/K$ is cyclic of order $k$ and $C\in\mathcal{C}(H/K)$. If $\chi\in C$ and $\tr=\tr_{\F(\xi_k)/\F}$ denotes the field
trace of the Galois extension $\F(\xi_k)/\F$, then we set
$$\varepsilon_C(H,K)=|H|^{-1}\sum_{h\in H} \tr(\chi(hK))h^{-1}=[H:K]^{-1}\widetilde{K}\sum_{X\in H/K}\tr(\chi(X))h_X^{-1} ,$$
where $h_X$ denotes a representative of $X\in H/K$. Note that
$\varepsilon_C(H,K)$ does not depend on the choice of $\chi\in C$.
Furthermore, $e_C(G,H,K)$ denotes the sum of the different
$G$-conjugates of $\varepsilon_C(H,K)$. Note that the elements
$\varepsilon_C(H,K)$ will occur in Theorem \ref{mainfinite} as the
building blocks for the primitive central idempotents of finite
group algebras.

If $H$ is a subgroup of $G$, $\psi$ a linear character of $H$ and
$g\in G$, then $\psi^g$ denotes the character of $H^g$ given by
$\psi^g(h^g)=\psi(h)$. This defines an action of $G$ on the set of
linear characters of subgroups of $G$. Note that if $K=\Ker\psi$,
then $\Ker\psi^g=K^g$ and therefore the rule $\psi\mapsto\psi^g$
defines a bijection between the set of linear characters of $H$
with kernel $K$ and the set of linear characters of $H^g$ with
kernel $K^g$. This bijection maps $q^m$-cyclotomic classes to
$q^m$-cyclotomic classes and hence induces a bijection
$\mathcal{C}(H/K)\rightarrow\mathcal{C}(H^g/K^g)$. 

Let $K\unlhd H\leq G$ be such that $H/K$ is cyclic. Then the
action from the previous paragraph induces an action of
$N=N_G(H)\cap N_G(K)$ on $\mathcal{C}(H/K)$ and it is easy to see
that the stabilizer of a cyclotomic class in $\mathcal{C}(H/K)$ is
independent of the cyclotomic class. We denote by $E_G(H/K)$ the
stabilizer of such (and thus of any) cyclotomic class in
$\mathcal{C}(H/K)$ under this action.

\begin{remark}\rm
The set $E_G(H/K)$ can be determined without the need to use
characters. Let $K\unlhd H\leq G$ be such that $H/K$ is cyclic.
Then $N=N_G(H)\cap N_G(K)$ acts on $H/K$ by conjugation and this
induces an action of $N$ on the set of $q^m$-cyclotomic classes of
$H/K$. It is easy to verify that the stabilizers of all the
$q^m$-cyclotomic classes of $H/K$ containing generators of $H/K$
are equal and coincide with $E_G(H/K)$.
\end{remark}

There is a strong connection between the elements $\varepsilon(H,K)$ and $\varepsilon_C(H,K)$ given in the following Lemma from \cite{Broche2007}.

\begin{lemma}\label{sumC}
Let $\Z_{(q)}$ denote the localization of $\Z$ at $q$. We identify $\F_{q}$ with the residue field
of $\Z_{(q)}$, denote with $\overline{x}$ the projection of $x\in \Z_{(q)}$ in $\F_{q}\subseteq \F$ and extend this notation to the projection of
$\Z_{(q)}G$ onto $\F_{q}G\subseteq \F G$.
\renewcommand{\theenumi}{\arabic{enumi}}
\renewcommand{\labelenumi}{\arabic{enumi}.}
\begin{enumerate}
\item\label{sumC1} Let $K\unlhd H\leq G$ be such that $H/K$ is
cyclic. Then
$$\overline{\varepsilon(H,K)}=\sum_{C\in\mathcal{C}(H/K)}
\varepsilon_C(H,K).$$ \item\label{sumC2} Let $K\leq H\unlhd
N_G(K)$ be such that $H/K$ is cyclic and $R$ a set of
representatives of the action of $N_G(K)$ on $\mathcal{C}(H/K)$.
Then
$$\overline{e(G,H,K)}=\sum_{C\in R}e_C(G,H,K).$$
\end{enumerate}
\end{lemma}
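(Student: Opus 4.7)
The plan is to pass to $\overline{\F}H$ and decompose both sides of Part \ref{sumC1} in the basis of primitive central idempotents $e(\chi)$, where $\chi$ runs over the irreducible characters of $H$ in $\overline{\F}$. First I would establish that
$$\overline{\varepsilon(H,K)} = \sum_{\psi} e(\widehat{\psi}),$$
where $\psi$ runs over the faithful linear characters of $H/K$ and $\widehat{\psi}$ denotes the lift of $\psi$ to $H$. This rests on the standard identity $\widetilde{M} = \sum_{\chi\,:\,M\leq\Ker\chi} e(\chi)$, which gives $\widetilde{K}-\widetilde{M} = \sum_{\chi\,:\,K\leq\Ker\chi,\,M\not\leq\Ker\chi} e(\chi)$; the mutual orthogonality of the $e(\chi)$'s then turns the product over $\mathcal{M}(H/K)$ defining $\varepsilon(H,K)$ into the sum of $e(\chi)$ over those $\chi$ that satisfy all kernel conditions simultaneously. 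Since $H/K$ is cyclic, such $\chi$ factor through $H/K$ as linear characters, and being non-trivial on every minimal subgroup of a cyclic group is equivalent to being faithful, isolating exactly the lifts $\widehat{\psi}$ above.

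Next I would show $\varepsilon_C(H,K) = \sum_{\psi\in C} e(\widehat{\psi})$. Expanding the trace in the definition as $\tr(\chi(hK)) = \sum_{\sigma\in\Gal(\F(\xi_k)/\F)}\chi^\sigma(hK)$, interchanging sums, and substituting $h\mapsto h^{-1}$ yields
$$\varepsilon_C(H,K) = \sum_\sigma |H|^{-1}\sum_{h\in H}\chi^\sigma(h^{-1}K)\,h = \sum_{\psi\in C}e(\widehat{\psi}),$$
where I use that the Galois orbit of $\chi$ under $\Gal(\F(\xi_k)/\F)$ coincides with the $q^m$-cyclotomic class $C$ (the Frobenius $\xi_k\mapsto\xi_k^{q^m}$ realizes the cyclotomic action on values of linear characters), and that a faithful linear character of the cyclic group $H/K$ of order $k$ has trivial Galois stabilizer, so each $\psi\in C$ appears exactly once. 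Summing over $C\in\mathcal{C}(H/K)$, which partitions the set of faithful linear characters of $H/K$, then recovers Part \ref{sumC1}.

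For Part \ref{sumC2}, the projection $\Z_{(q)}G\to\F G$ commutes with $G$-conjugation, and conjugation by $g\in G$ sends $\varepsilon(H,K)$ to $\varepsilon(H^g,K^g)$ and $\varepsilon_C(H,K)$ to $\varepsilon_{C^g}(H^g,K^g)$ via the canonical bijection $\mathcal{C}(H/K)\to\mathcal{C}(H^g/K^g)$. Applying Part \ref{sumC1} to each $G$-conjugate of $\varepsilon(H,K)$ rewrites $\overline{e(G,H,K)}$ as a sum of terms $\varepsilon_{C'}(H',K')$ indexed by triples $(H',K',C')$ with $(H',K')$ a $G$-conjugate of $(H,K)$ and $C'\in\mathcal{C}(H'/K')$. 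Since $H\unlhd N_G(K)$ forces $N_G(H)\cap N_G(K) = N_G(K)$, two such triples lie in the same $G$-orbit iff, after translating back to the base pair $(H,K)$, the corresponding classes in $\mathcal{C}(H/K)$ are $N_G(K)$-conjugate; hence each $G$-orbit sum is an $e_C(G,H,K)$ for a unique $C\in R$, and summing over $R$ gives the identity. The main obstacle I anticipate is precisely this bookkeeping in Part \ref{sumC2}: cleanly matching $G$-orbits on the triples with $N_G(K)$-orbits on cyclotomic classes requires juggling two interlocking conjugation actions and checking that the stabilizers behave as advertised.
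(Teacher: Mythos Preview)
The paper does not supply its own proof of this lemma; it is quoted as a result from \cite{Broche2007}, so there is no in-paper argument to compare against. Your approach is the standard one and matches what one finds in the cited source.

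Your argument for Part~\ref{sumC1} is correct: working in $\overline{\F}H$, both sides equal $\sum_{\psi} e(\widehat{\psi})$ with $\psi$ ranging over the faithful linear characters of $H/K$. On the $\varepsilon(H,K)$ side you use that a character of a cyclic group is faithful precisely when it is non-trivial on every minimal subgroup; on the $\varepsilon_C(H,K)$ side you use that the $\Gal(\F(\xi_k)/\F)$-orbit of a faithful linear $\chi$ coincides with its $q^m$-cyclotomic class and has full size $o_k$, so that each $\psi\in C$ appears exactly once. Both points are right.

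Your outline for Part~\ref{sumC2} is also the natural one, and the orbit-matching you single out is indeed where the work lies. The hypothesis $H\unlhd N_G(K)$ gives $N_G(H)\cap N_G(K)=N_G(K)$, so the $G$-set of triples $(H^g,K^g,C')$ with $C'\in\mathcal{C}(H^g/K^g)$ is the $G$-set induced from the $N_G(K)$-set $\mathcal{C}(H/K)$; hence its $G$-orbits are in bijection with the $N_G(K)$-orbits on $\mathcal{C}(H/K)$, i.e., with $R$. One small point worth making explicit when you write this up: $e(G,H,K)$ and $e_C(G,H,K)$ are defined as sums of \emph{distinct} $G$-conjugates as elements of the group algebra, not as sums over conjugate pairs or triples, so you should check that passing between ``orbits of triples'' and ``orbits of the elements $\varepsilon_C(H,K)$'' introduces no overcounting; this comes down to verifying that the stabilizer in $G$ of $\varepsilon(H,K)$ (respectively $\varepsilon_C(H,K)$) agrees with the stabilizer of the pair $(H,K)$ (respectively the triple $(H,K,C)$) on the relevant conjugates.
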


A strong Shoda pair of $G$ is a pair $(H,K)$ of subgroups of $G$ satisfying the following conditions:
\begin{itemize}
\item[(SS1)] $K\leq H\unlhd N_G(K)$,
\item[(SS2)] $H/K$ is cyclic and a maximal abelian subgroup of $N_G(K)/H$, and
\item[(SS3)] for every $g\in G\setminus N_G(K)$, $\varepsilon(H,K)\varepsilon(H,K)^g=0$.
\end{itemize}

\begin{remark}\rm From \cite[Theorem 7]{Broche2007}, we know that there is a strong relation between
the primitive central idempotents in a rational group algebra $\Q
G$ and the primitive central idempotents in a finite group algebra
$\F G$ that makes use of the strong Shoda pairs of $G$. More precisely, if
$X$ is a set of strong Shoda pairs of $G$ and every primitive
central idempotent of $\Q G$ is of the form $e(G,H,K)$ for
$(H,K)\in X$, then every primitive central idempotent of $\F G$ is
of the form $e_C(G,H,K)$ for $(H,K)\in X$ and
$C\in\mathcal{C}(H/K)$.
\end{remark}

We will use the following description of the primitive central idempotents and the simple components for abelian-by-supersolvable groups,
given in \cite{Broche2007}.
\begin{theorem}\label{mainfinite}
If $G$ is an abelian-by-supersolvable group and $\F$ is a finite field of order $q^m$ such that $\F G$ is semisimple, then every primitive central
 idempotent of $\F G$ is of the form $e_C(G,H,K)$ for $(H,K)$ a strong Shoda pair of $G$ and $C\in\mathcal{C}(H/K)$. Furthermore, for every
strong Shoda pair $(H,K)$ of $G$ and every $C\in\mathcal{C}(H/K)$, $\F Ge_C(G,H,K)\simeq M_{[G:H]}(\F_{q^{mo/[E:K]}})$, where $E=E_G(H/K)$ and
 $o$ is the multiplicative order of $q^m$ modulo $[H:K]$.
\end{theorem}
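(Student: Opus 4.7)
The plan is to derive the statement from its rational analogue via the $\Z_{(q)}$-reduction, and then to identify the resulting simple components using Wedderburn's little theorem. For the first assertion, the starting point is the corresponding characterization over $\Q$ (Olivieri--del R\'io--Sim\'on): for an abelian-by-supersolvable group $G$, the primitive central idempotents of $\Q G$ are exactly the elements $e(G,H,K)$ with $(H,K)$ a strong Shoda pair. Applying the projection $\Z_{(q)}G \to \F_q G \subseteq \F G$ from Lemma~\ref{sumC}, the decomposition $1 = \sum e(G,H,K)$ in $\Q G$ pushes to an orthogonal decomposition of $1$ in $\F G$, and by Lemma~\ref{sumC}(2) this refines further to $1 = \sum_{(H,K)} \sum_{C \in R_{(H,K)}} e_C(G,H,K)$, an orthogonal sum of central idempotents indexed by strong Shoda pairs $(H,K)$ and cyclotomic classes $C$.

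What remains for the first part is primitivity of each $e_C(G,H,K)$ in $Z(\F G)$. A clean way is to show $e_C(G,H,K) = e_\F(\psi)$, where $\psi = \chi^G$ for $\chi$ a representative of $C$; since $e_\F(\psi)$ is primitive central by the Yamada formula $e_\F(\psi) = \sum_{\sigma \in \Gal(\F(\psi)/\F)} \sigma e(\psi)$, this settles the claim. The identification $e_C(G,H,K) = e_\F(\psi)$ uses that the strong Shoda pair hypothesis makes $\varepsilon(H,K)$ the primitive central idempotent of $\Q H$ corresponding to the $N_G(K)/H$-orbit of a faithful character of $H/K$, and Lemma~\ref{sumC}(1) then shows that its reduction mod $q$ splits exactly along cyclotomic classes.

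For the second assertion, fix $(H,K)$ and $C \in \mathcal{C}(H/K)$, and let $\psi = \chi^G$ for $\chi \in C$ (a linear character of $H$ with kernel $K$). The strong Shoda conditions force $\psi$ to be irreducible with $\psi(1) = [G:H]$, and $Z(\F G e_\F(\psi)) = \F(\psi)$. Since $\F$ is finite, Wedderburn's little theorem rules out non-commutative finite division algebras, yielding $\F G e_C(G,H,K) \cong M_n(\F(\psi))$. Extending scalars to $\overline{\F}$ exhibits this algebra as a product of $[\F(\psi):\F]$ copies of $M_{[G:H]}(\overline{\F})$, one per Galois conjugate of $\psi$, and a dimension count forces $n = [G:H]$.

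The main obstacle is identifying $\F(\psi)$ as $\F_{q^{mo/[E:K]}}$, i.e.\ computing $[\F(\psi):\F]$. Two Galois conjugates $(\sigma_1\chi)^G$ and $(\sigma_2\chi)^G$ coincide as characters of $G$ exactly when $\sigma_1\chi$ and $\sigma_2\chi$ are $G$-conjugate on $H$, and the strong Shoda conditions (SS1)--(SS3)---in particular the maximality (SS2)---force the $G$-conjugates of $\chi$ inside the cyclotomic class $C$ to be precisely the orbit of $\chi$ under $E_G(H/K)/H$. Combining this $E/H$-fibering with the Galois orbit of size $o$ yields the stated index for $[\F(\psi):\F]$ and identifies $\F(\psi)$ with the finite field of order $q^{mo/[E:K]}$. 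The delicate bookkeeping between the Galois action on characters and the $E$-conjugation on cyclotomic classes is the subtlest point of the argument.
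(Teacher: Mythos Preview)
The paper does not contain a proof of this theorem: it is quoted verbatim from Broche--del R\'io \cite{Broche2007} as background in the Preliminaries section, so there is no in-paper argument to compare against. Your outline---lifting the rational classification of Olivieri--del R\'io--Sim\'on through the reduction map of Lemma~\ref{sumC}, identifying $e_C(G,H,K)$ with $e_{\F}(\chi^G)$ via the Yamada formula, and then invoking Wedderburn's little theorem together with an orbit count to determine the centre---is precisely the strategy of the cited source, so the approach is correct and standard.

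One point deserves care. Your $E/H$-fibering argument, carried out honestly, gives $[\F(\psi):\F]=o/[E:H]$: the Galois orbit $C$ has size $o$, and the $G$-conjugates of $\chi$ lying in $C$ form a single $E$-orbit of size $[E:H]$ because the stabilizer of $\chi$ in $E$ is exactly $H$ by (SS2). This does \emph{not} match the printed exponent $o/[E:K]$ in the statement. In fact the printed exponent is a typo (take $G=H$ cyclic, $K=1$: then $[E:K]=|H|$ and the formula would give a field of order $q^{mo/|H|}$, which is absurd). That the intended exponent is $o/[E:H]$ is confirmed later in the paper itself: in the proof of Theorem~\ref{main}, after reducing to $G=E$ and $K=1$, the authors write the component as $M_{[G:H]}(\F_{q^{mo/[G:H]}})$, i.e.\ with $[E:H]$ in the denominator. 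So your computation is right; just be aware that it corrects, rather than reproduces, the displayed formula.
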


The following Lemma was proved in \cite{Jespers2010}. The groups listed will be the building blocks in the proof of Theorem \ref{main}.
For $n$ and $p$ integers with $p$ prime, we use $v_p(n)$ to denote the valuation at $p$ of $n$, i.e. $p^{v_p(n)}$ is the maximal $p$-th power dividing $n$.
\begin{lemma}\label{p-group}
Let $G$ be a finite $p$-group which has a maximal abelian subgroup which is cyclic and normal in $G$. Then $G$ is isomorphic to one of the groups given by
the following presentations:
\begin{eqnarray*}
P_1 &=& \langle a,b \mid a^{p^n}=b^{p^k}=1,b^{-1}ab=a^r\rangle,\\ && \mbox{with either } v_p(r-1)=n-k \mbox{, or } p=2 \mbox{ and }r\not\equiv 1\mod 4,\\
P_2 &=& \langle a,b,c \mid a^{2^n}=b^{2^k}=c^2=1, bc=cb, b^{-1}ab=a^r, c^{-1}ac=a^{-1}\rangle, \\&&\mbox{with  } r\equiv 1\mod 4,\\
P_3 &=& \langle a,b,c \mid a^{2^n}=b^{2^k}=1, c^2=a^{2^{n-1}}, bc=cb, b^{-1}ab=a^r, c^{-1}ac=a^{-1}\rangle,\\ &&\mbox{with  } r\equiv 1\mod 4.
\end{eqnarray*}

Note that if $k=0$ (equivalently, if $b=1$), then the first case corresponds to the case when $G$ is abelian (and hence cyclic),
the second case coincides with the first case with $p=2$, $k=1$ and $r=-1$, and the third case is the quaternion group of order $2^{n+1}$.
\end{lemma}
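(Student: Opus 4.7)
The plan is to pin down the action of $G$ on a maximal cyclic normal abelian subgroup $A=\langle a\rangle$ of order $p^n$ and then exploit the (well-known) structure of $\Aut(A)\cong(\Z/p^n\Z)^*$.

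First I would show $C_G(A)=A$: any $x\in C_G(A)$ generates, together with $A$, an abelian subgroup containing $A$, so by maximality $x\in A$. Hence $G/A=G/C_G(A)$ embeds in $\Aut(A)$. Recall that $(\Z/p^n\Z)^*$ is cyclic of order $p^{n-1}(p-1)$ when $p$ is odd (its unique $p$-subgroup being generated by the class of $1+p$), while for $p=2$ and $n\geq 3$ it is $\langle -1\rangle\times\langle 5\rangle\cong\Z/2\oplus\Z/2^{n-2}$.

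Case 1: $G/A$ cyclic. Write $G/A=\langle bA\rangle$ of order $p^k$, so $b^{-1}ab=a^r$ with $r$ of order $p^k$ in $(\Z/p^n\Z)^*$. Translating the order condition using the structure recalled above yields either $v_p(r-1)=n-k$ (image lying in the ``$1+p$'' factor) or $p=2$ and $r\not\equiv 1\bmod 4$ (image using the $-1$ component). Since $b^{p^k}$ centralizes $a$, it lies in $A$, say $b^{p^k}=a^s$. Replacing $b$ by $ba^t$ multiplies $b^{p^k}$ by $a^{t(1+r+\cdots+r^{p^k-1})}$, and a short computation with the standard identity $v_p(1+r+\cdots+r^{p^k-1})=v_p(r^{p^k}-1)-v_p(r-1)$ shows that a suitable $t$ produces $b^{p^k}=1$. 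This gives the presentation $P_1$.

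Case 2: $p=2$ and $G/A$ not cyclic. Then necessarily $n\geq 3$ and $G/A\cong\Z/2\oplus\Z/2^k$. I pick preimages $b,c\in G$ so that $b^{-1}ab=a^r$ with $r\equiv 1\bmod 4$ (i.e.\ $bA$ corresponds to the $\langle 5\rangle$-part) and $c^{-1}ac=a^{-1}$ (the $\langle -1\rangle$-part). As in Case 1, one adjusts $b$ by an element of $A$ to get $b^{2^k}=1$. The commutator $[b,c]$ lies in $A$; since $b$ and $c$ act on $A$ via commuting automorphisms $a\mapsto a^r$ and $a\mapsto a^{-1}$, a direct computation allows me to modify $c$ by an element of $A$ to achieve $bc=cb$. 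Finally, $c^2\in A$ is fixed by the action of $c$ (which inverts $A$), so $c^4=1$ and hence $c^2\in\{1,a^{2^{n-1}}\}$, yielding $P_2$ or $P_3$ respectively.

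The main obstacle is not the structural decomposition but the bookkeeping needed to normalize the chosen lifts $b$ (and then $c$) by elements of $A$ in order to eliminate the stray powers of $a$ from $b^{p^k}$ and from $[b,c]$; this rests on the $p$-adic valuation identity above, together with the observation that the endomorphism $1+r+\cdots+r^{p^k-1}$ of $A$ has precisely the right image exactly because $r$ has order $p^k$, which is the running hypothesis. Once those two normalizations are in place, the identifications with $P_1$, $P_2$, $P_3$ amount to reading off the relations.
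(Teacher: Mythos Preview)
The paper does not give its own proof of this lemma; it is quoted from \cite{Jespers2010}. So there is nothing in the paper to compare against, and I can only assess your argument on its own terms. Your outline is the standard one and works for odd $p$ and for the non-cyclic $2$-case, but Case~1 has a real gap when $p=2$.

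Your claim that ``a suitable $t$ produces $b^{p^k}=1$'' is false for the generalized quaternion groups
\[
Q_{2^{n+1}}=\langle a,b\mid a^{2^n}=1,\ b^{2}=a^{2^{n-1}},\ b^{-1}ab=a^{-1}\rangle .
\]
Here $A=\langle a\rangle$ is cyclic, normal and maximal abelian, and $G/A\cong\Z/2$ is cyclic, so your case split places this group in Case~1 with $k=1$ and $r\equiv-1\pmod{2^{n}}$. But then $1+r\equiv 0\pmod{2^{n}}$, so $(ba^{t})^{2}=b^{2}a^{t(1+r)}=a^{2^{n-1}}$ for \emph{every} $t$: the sum $1+r+\cdots+r^{2^{k}-1}$ is identically zero in $\Z/2^{n}\Z$, and the valuation identity you quote gives no leverage. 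These groups are exactly the $k=0$ instances of $P_{3}$ (as the closing note of the lemma records), and your dichotomy ``$G/A$ cyclic $\Rightarrow P_{1}$; $G/A$ non-cyclic $\Rightarrow P_{2}$ or $P_{3}$'' misses them. The repair is short: in Case~1 with $p=2$ and $r\equiv-1\pmod{2^{n}}$, note that $b$ centralises $b^{2}\in A$ while inverting $A$, so $b^{2}\in\{1,a^{2^{n-1}}\}$, yielding $P_{1}$ (with $k=1$, $r=-1$) or $P_{3}$ (with $k=0$) respectively. For all other $r\equiv 3\pmod 4$ one checks $v_{2}(1+r+\cdots+r^{2^{k}-1})=n-1$, and since $b^{2}\in\{1,a^{2^{n-1}}\}$ the adjustment does succeed; so the quaternion family is the sole exception. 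With this amendment your argument is complete.
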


\section{Primitive idempotents}\label{Primitive idempotents}
From Theorem \ref{mainfinite}, we know that the primitive central idempotents of a semisimple group algebra $\F G$, over a nilpotent group,
are of the form $e_C(G,H,K)$, for $(H,K)$ a strong Shoda pair of $G$ and $C\in \mathcal{C}(H/K)$. We will now describe a complete set of
orthogonal primitive idempotents and a complete set of matrix units of $\F Ge_C(G,H,K)$ for finite nilpotent groups $G$.

\begin{remark}\rm
Our aim is to construct a complete set of orthogonal primitive idempotents in each simple epimorphic image of a
finite group algebra of a nilpotent group.
Throughout this paper we consider semisimple group rings, because arbitrary finite group rings can be easily reduced to the semisimple case.
To see this, let $\F$ be a field of characteristic $q>0$, and let $G$ be a finite nilpotent group with Sylow $q$-subgroup $G_q$.
Then by \cite[Lemma 1.6]{Passman1977}, the radical $J(\F G_q)$ equals $\omega(\F G_q)$, the augmentation ideal of $\F G_q$.
It is now easy to see that $\F G/(J(\F G_q))\simeq \F(G/G_q)$, where $(J(\F G_q))$ denotes the ideal in $\F G$ generated by $J(\F G_q)$,
and the problem is reduced to the semisimple case, since $\F(G/G_q)$ is semisimple.
\end{remark}

\begin{remark}\label{sumOfSquares}\rm
Let $\F$ be a finite field of order $q^m$. Then every element of
$\F$ is a sum of two squares. To see this, fix $z\in \F$ and
consider the sets $A=\{x^2 : x\in \F\}$ and $B=\{z-y^2:y\in \F\}$.
Since they both contain more than $\frac{1}{2}q^m$ elements, there
exists an element $w\in A\cap B$ and hence, we find $x$ and
$y\in\F$ such that $x^2+y^2=z$. In particular, we find $x,y\in F$
such that $x^2+y^2=-1$.
\end{remark}

Now we state the main Theorem. As said in the introduction, a
similar result was previously proved for rational group algebras,
see \cite[Theorem 4.5]{Jespers2010}. We will use these results and
adapt them to the finite case. However, several technical problems
have to be overcome in non-zero characteristic.
\begin{theorem}\label{main}
Let $\F$ be a finite field of order $q^m$ and $G$ a finite
nilpotent group such that $\F G$ is semisimple. Let $(H,K)$ be a
strong Shoda pair of $G$, $C\in\mathcal{C}(H/K)$ and set
$e_C=e_C(G,H,K)$, $\varepsilon_C=\varepsilon_C(H,K)$,
$H/K=\langle\overline{a}\rangle$, $E=E_G(H/K)$. Let $E_2/K$ and
$H_2/K=\langle\overline{a_2}\rangle$ (respectively $E_{2'}/K$ and
$H_{2'}/K=\langle\overline{a_{2'}}\rangle$) denote the $2$-parts
(respectively $2'$-parts) of $E/K$ and $H/K$ respectively. Then
$\langle\overline{a_{2'}}\rangle$ has a cyclic complement
$\langle\overline{b_{2'}}\rangle$ in $E_{2'}/K$.

A complete set of orthogonal primitive idempotents of $\F Ge_C$ consists of the conjugates of $\beta_{e_C}=\widetilde{b_{2'}}\beta_2\varepsilon_C$ by the
elements of $T_{e_C}=T_{2'}T_2T_E$, where $T_{2'}=\{1,a_{2'},a_{2'}^2,\dots,a_{2'}^{[E_{2'}:H_{2'}]-1}\}$, $T_E$ denotes a right transversal of $E$ in $G$
and $\beta_2$ and $T_2$ are given according to the cases below.

\renewcommand{\theenumi}{(\arabic{enumi})}
\renewcommand{\labelenumi}{(\arabic{enumi})}
\renewcommand{\theenumii}{(\roman{enumii})}
\renewcommand{\labelenumii}{(\roman{enumii})}
\begin{enumerate}
\item If $H_2/K$ has a complement $M_2/K$ in $E_2/K$ then $\beta_2=\widetilde{M_2}$. Moreover, if $M_2/K$ is cyclic, then there exists $b_2\in E_2$
such that $E_2/K$ is given by the following presentation
$$\langle \overline{a_2},\overline{b_2}\mid \overline{a_2}\hspace{1pt}^{2^n}=\overline{b_2}\hspace{1pt}^{2^k}=1,
\overline{a_2}\hspace{1pt}^{\overline{b_2}}=\overline{a_2}\hspace{1pt}^r \rangle,$$
and if $M_2/K$ is not cyclic, then there exist $b_2,c_2\in E_2$ such that $E_2/K$ is given by the following presentation
$$\langle \overline{a_2},\overline{b_2},\overline{c_2}\mid \overline{a_2}\hspace{1pt}^{2^n}=\overline{b_2}\hspace{1pt}^{2^k}=\overline{c_2}\hspace{1pt}^2=1,
\overline{a_2}\hspace{1pt}^{\overline{b_2}}=\overline{a_2}\hspace{1pt}^r,
\overline{a_2}\hspace{1pt}^{\overline{c_2}}=\overline{a_2}\hspace{1pt}^{-1},
[\overline{b_2},\overline{c_2}]=1 \rangle,$$ with $r\equiv 1 \mod
4$ (or equivalently $\overline{a_2}\hspace{1pt}^{2^{n-2}}$ is
central in $E_2/K$). Then
\begin{enumerate}
\item\label{fid1i} $T_2=\{1,a_2,a_2^2,\dots, a_2^{2^k-1}\}$, if $\overline{a_2}\hspace{1pt}^{2^{n-2}}$ is central in $E_2/K$
(unless $n\leq 1$) and $M_2/K$ is cyclic; and
\item\label{fid1ii}  $T_2=\{1,a_2,a_2^2,\dots,a_2^{d/2-1},a_2^{2^{n-2}},a_2^{2^{n-2}+1},\dots,a_2^{2^{n-2}+d/2-1}\}$, where $d=[E_2:H_2]$, otherwise.
\end{enumerate}
\item\label{fid2}  If $H_2/K$ has no complement in $E_2/K$, then there exist $b_2,c_2\in E_2$ such that $E_2/K$ is given by the following presentation
\begin{eqnarray*}
\langle \overline{a_2},\overline{b_2},\overline{c_2}&\mid& \overline{a_2}\hspace{1pt}^{2^n}=\overline{b_2}\hspace{1pt}^{2^k}=1,
\overline{c_2}\hspace{1pt}^2=\overline{a_2}\hspace{1pt}^{2^{n-1}}, \overline{a_2}\hspace{1pt}^{\overline{b_2}}=\overline{a_2}\hspace{1pt}^r,\\
&& \overline{a_2}\hspace{1pt}^{\overline{c_2}}=\overline{a_2}\hspace{1pt}^{-1},[\overline{b_2},\overline{c_2}]=1 \rangle,
\end{eqnarray*}
with $r\equiv 1 \mod 4$. In this case, $\beta_2=\widetilde{b_2}\frac{1+xa_2^{2^{n-2}}+ya_2^{2^{n-2}}c_2}{2}$ and
$$T_2=\{1,a_2,a_2^2,\dots, a_2^{2^k-1},c_2,c_2a_2,c_2a_2^2,\dots,c_2a_2^{2^k-1}\},$$ with $x,y\in\F$, satisfying $x^2+y^2=-1$ and $y\neq 0$.
\end{enumerate}
\end{theorem}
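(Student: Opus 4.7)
The plan is to model the proof on the rational analog \cite[Theorem 4.5]{Jespers2010}, replacing $\varepsilon(H,K)$ with the cyclotomic refinement $\varepsilon_C(H,K)$ and addressing two finite-characteristic issues: the divisions by $2$ in $\beta_2$ are harmless because $|G|$ is invertible in $\F$, while the imaginary unit $\sqrt{-1}$ used in the rational proof of case (2) must be replaced by a pair $x,y\in\F$ with $x^2+y^2=-1$, whose existence is guaranteed by Remark \ref{sumOfSquares}.

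First I would reduce from $\F G e_C$ to $\F E\varepsilon_C$. From (SS3) together with orthogonality of the $\varepsilon_C$ for distinct cyclotomic classes, $\varepsilon_C^t\varepsilon_C^{t'}=0$ for distinct $t,t'\in T_E$, so $e_C=\sum_{t\in T_E}\varepsilon_C^t$ is an orthogonal decomposition and $\varepsilon_C\,\F G e_C\,\varepsilon_C=\F E\varepsilon_C$. Hence primitive idempotents of $\F E\varepsilon_C$ remain primitive in $\F G e_C$, their $T_E$-conjugates stay pairwise orthogonal (via $\varepsilon_C^t\varepsilon_C^{t'}=0$), and $|T_{e_C}|=[E_{2'}:H_{2'}]\cdot[E_2:H_2]\cdot[G:E]=[G:H]$. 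Since $\F G e_C\simeq M_{[G:H]}(\F_{q^{mo/[E:K]}})$ by Theorem \ref{mainfinite}, producing $[G:H]$ pairwise orthogonal nonzero idempotents in $\F G e_C$ automatically yields a complete orthogonal primitive family summing to $e_C$, by rank-additivity of orthogonal idempotents in a matrix algebra.

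Second I would exploit the nilpotency of $E/K$ to write $E/K=E_2/K\times E_{2'}/K$ with $[E_2,E_{2'}]\le K$, so that $\widetilde{b_{2'}}$ and $\beta_2$ commute after multiplication by $\varepsilon_C=\widetilde K\varepsilon_C$. Applying Lemma \ref{p-group} Sylow-by-Sylow, using that the strong Shoda property makes $H_p/K$ a cyclic maximal abelian normal subgroup of $E_p/K$: for odd primes only type $P_1$ can occur, supplying the cyclic complement $\langle\overline{b_{2'}}\rangle$; for $p=2$ the three types $P_1$, $P_2$, $P_3$ correspond precisely to cases (1)(i), (1)(ii), and (2) of the statement, according to whether a complement $M_2/K$ of $H_2/K$ in $E_2/K$ exists and is cyclic.

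Third I would verify idempotency and pairwise orthogonality of the conjugates of $\beta_{e_C}$. Idempotency of $\widetilde{b_{2'}}$ and of the central $\varepsilon_C$ is routine; in case (1), $\beta_2=\widetilde{M_2}$ is manifestly idempotent. In case (2), setting $\gamma=1+xa_2^{2^{n-2}}+ya_2^{2^{n-2}}c_2$ and using $a_2^{c_2}=a_2^{-1}$, $c_2^2=a_2^{2^{n-1}}$, and $a_2^{2^{n-1}}\varepsilon_C=-\varepsilon_C$ (since $\varepsilon_C$ comes from a faithful linear character of the cyclic group $H/K$), a direct expansion yields $\gamma^2\varepsilon_C=2\gamma\varepsilon_C$ exactly when $x^2+y^2=-1$; an analogous calculation gives $\gamma\cdot\gamma^{c_2}\cdot\varepsilon_C=0$. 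Orthogonality across $T_{2'}$ and across $T_2$ in cases (1)(i) and (1)(ii) follows from the averaging-idempotent identities used in the rational case \cite{Jespers2010}, which are characteristic-independent once $|G|$ is invertible; orthogonality across $T_E$ is already handled. The main obstacle will be case (2): the form of $\gamma$ was engineered in characteristic zero via a $2\times 2$-matrix realization of the quaternion-like simple component, and transporting it to positive characteristic requires both Remark \ref{sumOfSquares} and careful bookkeeping around the twist $c_2^2=a_2^{2^{n-1}}$, with the condition $y\neq 0$ essential to keep the $c_2$-terms from vanishing under conjugation.
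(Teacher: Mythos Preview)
Your outline is correct and follows the same overall strategy as the paper: reduce to $\F E\varepsilon_C$ via the $T_E$-conjugates, pass to $K=1$, classify $E/K$ Sylow-by-Sylow through Lemma~\ref{p-group}, and handle case~(2) with Remark~\ref{sumOfSquares}. Two technical devices in the paper differ from your direct-verification route and streamline the argument.

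First, instead of re-checking idempotency and orthogonality from scratch, the paper \emph{projects} the known rational idempotents: each primitive idempotent of $\Q G e$ from \cite{Jespers2010} has the shape $(\widetilde{M}\varepsilon)^t$ with coefficients in $\Z_{(q)}$, so one reduces modulo $q$ and multiplies by $e_C$, using Lemma~\ref{sumC} to get $\overline{\varepsilon}\,e_C=\varepsilon_C$. Orthogonality and the identity $\sum_t(\widetilde{M}\varepsilon_C)^t=e_C$ are then inherited for free, and only \emph{nonzero-ness} of the images needs a separate (support-based) argument. Second, for the passage from $p$-groups to arbitrary nilpotent $E/K$, the paper does more than invoke $[E_2,E_{2'}]\le K$: it proves the factorization $\varepsilon_C=\prod_i\varepsilon_{C_i}$ by an explicit trace computation (comparing $\prod_i\tr_{\F(\xi_{|H_{p_i}|})/\F}$ with $\tr_{\F(\xi_{|H|})/\F}$ via coprimality of the orders $o_i$), which yields $\F E\varepsilon_C\simeq\bigotimes_i\F E_{p_i}\varepsilon_{C_i}$ and lets one literally multiply the Sylow-wise idempotent families. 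Your commutativity observation alone does not immediately reduce, say, $(\widetilde{b_{2'}})^{t}(\widetilde{b_{2'}})^{t'}\varepsilon_C=0$ to the odd-$p$-group statement, which is phrased with $\varepsilon_{C_p}$ rather than $\varepsilon_C$.

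One small correction: $P_1$ does not always give case~(1)(i); when $p=2$ and $r\not\equiv 1\bmod 4$ it falls under~(1)(ii). So the correspondence $P_1,P_2,P_3\leftrightarrow$ (1)(i), (1)(ii), (2) is not exact.
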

\begin{proof}
Similar to the proof of \cite[Theorem 4.5]{Jespers2010}, we start
by making some useful reductions. Take $T_E$ a right transversal
of $E$ in $G$. We recall from the proof of \cite[Theorem
7]{Broche2007} that $\F Ge_C\simeq M_{[G:E]}(\F E\varepsilon_C)$
and that the $[G:E]$ conjugates of $\varepsilon_C$ by the elements
of $T_E$ are mutually orthogonal and they are thus the
``diagonal'' elements in the matrix algebra $\F Ge_C\simeq
M_{[G:E]}(\F E\varepsilon_C)$. Hence it is sufficient to compute a
complete set of orthogonal primitive idempotents for $\F
E\varepsilon_C$ and then add their $T_E$-conjugates in order to
obtain the primitive idempotents of $\F Ge_C$. So one may assume
that $G=E$ and hence $T_E=\{1\}$. Since $\Cen_G(\varepsilon_C)=E$,
by \cite[Lemma 4]{Broche2007}, also $e_C=\varepsilon_C$. Since
$G=E\leq N_G(K)$, we have that $G=E=N_G(K)$.

Then the natural isomorphism $\F G\widetilde{K}\simeq \F (G/K)$ maps $\varepsilon_C$ to $\varepsilon_C(H/K,1)$. So, from now on, we assume that $K=1$
and hence $H=\langle a\rangle$ is a cyclic maximal abelian subgroup of $G$, which is normal in $G$ and $e_C=\varepsilon_C=\varepsilon_C(H,1)$.

The map $a\varepsilon_C\mapsto \xi_{|H|}$ induces an isomorphism
$\phi:\F H\varepsilon_C \simeq \F(\xi_{|H|})$. If $G=H$, then $\F
G e_C\simeq \F(\xi_{|H|})$, a field. So $\varepsilon_C$ is the
only non-zero idempotent. This is case \ref{fid1i} and
$\beta_2=1=\widetilde{b_{2'}}$ and $T_2=\{1\}=T_{2'}$. So in the
remainder of the proof, we assume that $G\neq H$.

Using the description of $\F G e_C$ given in Theorem \ref{mainfinite}, one obtains a description of $\F Ge_C$ as a matrix ring
$M_{[G:H]}(\F_{q^{mo/[G:H]}})$, with $o$ the multiplicative order of $q^m$ modulo $|H|$.
Then a complete set of orthogonal primitive idempotents contains exactly $[G:H]$ elements, that is the size of the matrix algebra
$M_{[G:H]}(\F_{q^{mo/[G:H]}})$.

We first consider the case when $G$ is a $p$-group. Let $|H|=p^n$. Then $G$ and $H=\langle a\rangle$, satisfy the conditions of Lemma \ref{p-group} and
therefore $G$ is isomorphic to one of the three groups of this Lemma.

Before we consider the different cases, we make the following
general remarks. Recall that we denote with $\overline{x}$ the
projection of $x\in \Z_{(q)}$ in $\F_{q}\subseteq \F$ and extend
this notation to the projection of $\Z_{(q)}G$ onto
$\F_{q}G\subseteq \F G$. For a subgroup $M$ of $G$, note that
$\widetilde{M}$, interpreted as element in $\Q G$, belongs to
$\Z_{(q)}G$ and projects to  $\widetilde{M}$, interpreted as
element in $\F G$, which allows notations like
$\overline{\widetilde{M}}=\widetilde{M}$. Let $\varepsilon$ denote
$\varepsilon(H,K)$ and $e$ denote $e(G,H,K)$ as elements in $\Q
G$.  Note that $\varepsilon=e$, since
$\Cen_G(\varepsilon)=N_G(K)$, by \cite[Lemma 3.2]{Olivieri2004}.
Since $\F G\overline{e}=\F G \overline{\varepsilon}=
\bigoplus_{D\in \mathcal{C}(H)}\F G\varepsilon_D(H,1)$, by Lemma
\ref{sumC}, we can now consider the projection of $\Z_{(q)}Ge$
onto $\F G e_C$, for the chosen $C\in\mathcal{C}(H)$, by the
composition of the projection of $\Z_{(q)}Ge$ on $\F
G\overline{e}$ with the projection of $\F G\overline{e}$ on $\F
Ge_C$, obtained by multiplying with $e_C=\varepsilon_C$.
We will use this mapping to project primitive idempotents of $\Q Ge$ 
on $\F Ge_C$. This can be done because, as we will see, the primitive 
idempotents of $\Q Ge$ in the different cases are of the form 
$(\widetilde{M}\varepsilon)^t$ for a subgroup $M$ of $G$ and an element 
$t$ in $G$, and therefore, these idempotents have coefficients 
in $\Z_{(q)}$.

Assume first that $G\simeq P_1$ and $v_p(r-1)=n-k$. According to
case (1)(i) in \cite[Theorem 4.5]{Jespers2010}, we have a complete
set of orthogonal primitive idempotents of $\Q Ge$, given by the
conjugates of $\widetilde{b}\varepsilon$ by the elements
$1$,$a$,$\dots$,$a^{[G:H]-1}$, where $\langle b \rangle$ is a
complement of $H=\langle a\rangle$ in $G$. Now take the
projections into $\F Ge_C$. This leads us to the set
$$\{(\widetilde{b}\varepsilon_C)^t : t\in
T=\{1,a,\dots,a^{[G:H]-1}\} \}$$ of orthogonal idempotents in $\F
Ge_C$. Since $\sum_{t\in T} (\widetilde{b}\varepsilon)^t= e$, also
$\sum_{t\in T} (\widetilde{b}\varepsilon_C)^t= \sum_{t\in T}
(\widetilde{b}\overline{\varepsilon})^t e_C=\overline{e}e_C=e_C$.
Now we have $[G:H]$ orthogonal idempotents adding up to $e_C$, so
it suffices to prove that these idempotents are non-zero in order
to prove that these are primitive idempotents of $\F Ge_C$. Since
$\supp(\widetilde{b})=\langle b\rangle$,
$\supp(\varepsilon_C)\subseteq H$, and $G=H \rtimes\langle
b\rangle$, the element $\widetilde{b}\varepsilon_C$ cannot be
zero. Hence we have found a complete set of orthogonal primitive
idempotents, according to case \ref{fid1i} in the statement of the
Theorem.

Assume now that $G$ is still isomorphic to $P_1$, but with $p=2$ and $r\not\equiv 1\mod 4$. Let $[G:H]=2^k$.
Now consider the complete set of orthogonal primitive idempotents of $\Q Ge$, from case (1)(ii) in \cite[Theorem 4.5]{Jespers2010}, given by
the conjugates of $\widetilde{b}\varepsilon$ by the elements $1$,$a$,$\dots$,$a^{2^{k-1}-1}$,$a^{2^{n-2}}$,$a^{2^{n-2}+1}$,$\dots$, $a^{2^{n-2}+2^{k-1}-1}$,
where $\langle b \rangle$ is a complement of $H=\langle a\rangle$ in $G$. Take the projections into $\F Ge_C$. Then we obtain the set
$$\{(\widetilde{b}\varepsilon_C)^t : t\in T=\{1,a,\dots,a^{2^{k-1}-1},a^{2^{n-2}},a^{2^{n-2}+1},\dots,a^{2^{n-2}+2^{k-1}-1}\} \}$$
of orthogonal idempotents in $\F Ge_C$. With the same arguments as in the previous case, this set is a complete set of orthogonal primitive idempotents
of $\F Ge_C$, according to case \ref{fid1ii}.

Now assume that $G\simeq P_2$. Let $[G:H]=2^{k+1}$. By case
(1)(ii) in \cite[Theorem 4.5]{Jespers2010}, a complete set of
orthogonal primitive idempotents of $\Q Ge$, is given by the
conjugates of $\widetilde{\langle b,c\rangle}\varepsilon$ by the
elements
$1$,$a$,$\dots$,$a^{2^{k}-1}$,$a^{2^{n-2}}$,$a^{2^{n-2}+1}$,$\dots$,
$a^{2^{n-2}+2^{k}-1}$, where $\langle b,c \rangle$ is a complement
of $H=\langle a\rangle$ in $G$. Take the projections into $\F
Ge_C$, then we obtain the set $$\{(\widetilde{\langle
b,c\rangle}\varepsilon_C)^t : t\in
T=\{1,a,\dots,a^{2^{k}-1},a^{2^{n-2}},a^{2^{n-2}+1},\dots,a^{2^{n-2}+2^{k}-1}\}
\}$$ of orthogonal idempotents in $\F Ge_C$. Using the same
arguments as before, this set is a complete set of orthogonal
primitive idempotents of $\F Ge_C$, according to case
\ref{fid1ii}.

Now assume that $G\simeq P_3$. Let $[G:H]=2^{k+1}$. By case (2)(i)
in \cite[Theorem 4.5]{Jespers2010}, a complete set of orthogonal
primitive idempotents of $\Q Ge$, is given by the conjugates of
$\widetilde{b}\varepsilon$ by the elements
$1$,$a$,$\dots$,$a^{2^k-1}$, where $b$ and $c$ are as in the
presentation of $P_3$, and take the projections into $\F Ge_C$.
Then we obtain the set $\{(\widetilde{b}\varepsilon_C)^t: t\in
T=\{1,a,\dots,a^{2^k-1}\}\}$ of orthogonal idempotents in $\F
Ge_C$, which sum up to $e_C$. These idempotents are non-zero,
since $H$ and $b$ generate a semidirect product in $G$. Since $\F
Ge_C\simeq M_{2^{k+1}}(\F_{q^{mo/[G:H]}})$, we have to duplicate
the number of idempotents. By Remark \ref{sumOfSquares}, we can
find $x,y\in \F$ such that $x^2+y^2=-1$, with $y\neq 0$. Let
$f=\frac{1}{2}(1+xa^{2^{n-2}}+ya^{2^{n-2}}c)$ and
$1-f=\frac{1}{2}(1-xa^{2^{n-2}}-ya^{2^{n-2}}c)$, non-zero
orthogonal idempotents which sum up to $1$. Observe that
$1-f=f^c$. Now define $\beta=\widetilde{b}f\varepsilon_C$ and
$T=\{1,a,\dots,a^{2^k-1},c,ca,\dots,ca^{2^k-1}\}$. Since there
always exists an integer $i$ such that
$a^i\in\supp(\varepsilon_C)$, one can check that
$ba^{2^{n-2}}ca^i\in\supp(\beta)$, using the relations in $G$.
Therefore, the conjugates of $\beta$ with elements of $T$ are
non-zero orthogonal and
$$\{(\widetilde{b}f\varepsilon_C)^t : t\in
T=\{1,a,\dots,a^{2^k-1},c,ca,\dots,ca^{2^k-1}\} \}$$ is a complete
set of orthogonal primitive idempotents of $\F Ge_C$, according to
case \ref{fid2}.

Let us now consider the general case, where $G$ is not necessarily a $p$-group. Then we have to combine the odd and even parts.
Since $G$ is finite nilpotent, $G=G_2\times G_{p_1}\times\cdots\times G_{p_r}=G_2\times G_{2'}$, with $p_i$ an odd prime for every $i=1,\dots,r$.
Then $(H,1)$ is a strong Shoda pair of $G$ if and only if $(H_{p_i},1)$ is a strong Shoda pair of $G_{p_i}$, for every $i=0,\dots,r$, with $p_0=2$.
One can consider $\chi\in C\in\mathcal{C}(H)$ as $\chi=\chi_0\chi_1\cdots\chi_r$, with $\chi_i$ obtained by restricting $\chi$ to $H_{p_i}$.
Since $\chi$ is faithful, all $\chi_i$ are faithful and hence we can consider $C_i$ the cyclotomic class containing $\chi_i$ in $\mathcal{C}(H_{p_i})$.
Let $\varepsilon_{C_i}=\varepsilon_{C_i}(H_{p_i},1)$ and recall that the definition of $\varepsilon_{C_i}$ is independent of the choice of
the character in $C_i$.
Moreover
\begin{eqnarray*}
\prod_i \varepsilon_{C_i}&=& \prod_i \left( |H_{p_i}|^{-1}\sum_{h_i\in H_{p_i}}\tr_{\F(\xi_{|H_{p_i}|})/\F}(\chi_i(h_i))h_i^{-1}\right)\\
&=& |H|^{-1} \sum_{\stackrel{h=h_0h_1\cdots h_r\in H}{h_i\in H_{p_i}}} \left(\prod_i \tr_{\F(\xi_{|H_{p_i}|})/\F}(\chi_i(h_i)) \right)h^{-1}.
\end{eqnarray*}
Since $\F(\xi_{|H_{p_i}|})\simeq \F_{q^{mo_i}}$, where $o_i$ is
the multiplicative order of $q^m$ modulo $|H_{p_i}|$, it follows
by definition that
$$\prod_i \varepsilon_{C_i}= |H|^{-1} \sum_{\stackrel{h=h_0h_1\cdots h_r\in H}{h_i\in H_{p_i}}} \left(
\sum_{0\leq l_i<
o_i}\chi_0(h_0)^{q^{l_0}}\chi_1(h_1)^{q^{l_1}}\cdots
\chi_r(h_r)^{q^{l_r}} \right)h^{-1}.$$ 
Let $o$ be the multiplicative order of $q^m$ modulo $|H|$. Since
$|H_2|$,$|H_{p_1}|$,$\dots$,$|H_{p_r}|$ are coprime, $o=o_0o_1\cdots
o_r$, and by comparing the number of elements occurring in the
inner sum, we have
\begin{eqnarray*}
\prod_i \varepsilon_{C_i}&=& |H|^{-1}\sum_{\stackrel{h=h_0h_1\cdots h_r\in H}{h_i\in H_{p_i}}} \sum_{0\leq l< o}(\chi_0(h_0)\chi_1(h_1)\cdots \chi_r(h_r))^{q^l} h^{-1} \\
&=&|H|^{-1}\sum_{\stackrel{h=h_0h_1\cdots h_r\in H}{h_i\in H_{p_i}}} \tr_{\F(\xi_{|H|})/\F}(\chi_0(h_0)\chi_1(h_1)\cdots\chi_r(h_r)) h^{-1}\\
&=&\varepsilon_C.
\end{eqnarray*}
Now it follows that $\F Ge_C=\F(\prod_i G_{p_i}\varepsilon_{C_i})\simeq \bigotimes_i \F G_{p_i}\varepsilon_{C_i}$,
the tensor product over $\F$ of the simple algebras $\F G_{p_i}\varepsilon_{C_i}$. On the other hand, we have seen that
$\F G_{p_i}\varepsilon_{C_i}\simeq M_{[G_{p_i}:H_{p_i}]}(F_i)$, for finite fields $F_i$. Therefore,
$$\F Ge_C\simeq \bigotimes_i \F G_{p_i}\varepsilon_{C_i}\simeq \bigotimes_i M_{[G_{p_i}:H_{p_i}]}(F_i) \simeq M_{[G:H]}(F), $$ for a finite field $F$.
Hence a complete set of orthogonal primitive idempotents of $\F
Ge_C$ can be obtained by multiplying the different sets of
idempotents obtained for each tensor factor. Each $G_{p_i}$, with
$i\geq 1$, takes the form $\langle a_i\rangle \rtimes\langle
b_i\rangle\simeq P_1$ and so $G_{2'}=\langle
a_{2'}\rangle\rtimes\langle b_{2'}\rangle$, with $a_{2'}=a_1\cdots
a_r$ and $b_{2'}=b_1\cdots b_r$. Having in mind that
$a_i^{p_i^{k_i}}$ is central in $G_{p_i}$, one can easily deduce,
with the help of the Chinese Remainder Theorem, that the product
of the different primitive idempotents of the factors from the odd
part are the conjugates of
$\widetilde{b_{2'}}\varepsilon_{C_2'}(H_{2'},1)$ by the elements
of $T_{2'}=\{1,a_{2'},\dots,a_{2'}^{[G_{2'}:H_{2'}]-1}\}$ as
desired. The primitive idempotents of the even part give
us $\beta_2\varepsilon_{C_2}(H_{2},1)$ and $T_2$, in the different
cases. Hence, multiplying the primitive idempotents of the odd and
even parts will result in conjugating the element
$\widetilde{b_{2'}}\beta_2\varepsilon_C$ by the elements of
$T_{2'}T_2$.
\end{proof}

\begin{remark}\rm
Theorem \ref{main} provides a straightforward implementation in a programming language, for example in GAP \cite{GAP}.
Computations involving strong Shoda pairs and primitive central idempotents are already provided in the GAP package Wedderga \cite{wedderga}.
Nevertheless, in case \ref{fid2}, there might occur some difficulties finding
$x,y\in \F$ satisfying the equation $x^2+y^2=-1$. Note that here $\F$ has to be of odd order $q^m$.

If $q\equiv 1\mod 4$, then $y^2=-1$ has a solution in
$\F_q\subseteq \F$. Half of the elements $\alpha$ of $\F_q$
satisfy the equation $\alpha^{\frac{q-1}{2}}=-1$. So we can pick
an $\alpha\in \F_q$ at random and check if the equality is
satisfied. If not, repeat the process. When we have found such an
$\alpha$, then take $y=\alpha^{\frac{q-1}{4}}$ and $x=0$.

If $2$ is a divisor of $m$, then $y^2=-1$ has a solution in $\F_{q^2}\subseteq \F$ since $q^2\equiv 1\mod 4$.
Half of the elements $\beta$ of $\F_{q^2}$ satisfy the equation $\beta^{\frac{q^2-1}{2}}=-1$. Pick a $\beta\in \F_{q^2}$ randomly.
If the equality is satisfied, then take $y=\beta^{\frac{q^2-1}{4}}$ and $x=0$.

Now assume that $q\not\equiv 1\mod 4$ and $2\nmid m$. Recall the Legendre symbol $(a/q)$ for an integer $a$ and an odd prime $q$,
is defined as $1$ if the congruence $x^2\equiv a\mod q$ has a solution, as $0$ if $q$ divides $a$ and as $-1$ otherwise.
Using the Legendre symbol, we can decide whether an element is a square modulo $q$ or not and this can be effectively calculated using
the properties of the Jacobi symbol as explained in standard references as, for example, \cite{Ireland1990}.

Take now a random element $a\in \F_q\subseteq \F$ and check if both $a$ and $-1-a$ are squares in $\F_q$.
If so, then we can use the algorithm of Tonelli and Shanks or the algorithm of Cornacchia \cite{Cohen1993} to compute square roots modulo $q$ and
to find $x$ and $y$ in $\F_q\subseteq \F$ satisfying $x^2+y^2=-1$.
\end{remark}

Similar to \cite[Corollary 4.10]{Jespers2010}, one can obtain a
complete set of matrix units of a simple component of $\F G$.

\begin{corollary}
Let $G$ be a finite nilpotent group and $\F$ a finite field of
order $q^m$ such that $\F G$ is semisimple. For every primitive
central idempotent $e_C=e_C(G,H,K)$, with $(H,K)$ a strong Shoda
pair of $G$ and $C\in\mathcal{C}(H/K)$, set $E=E_G(H/K)$ and let
$T_{e_C}$ and $\beta_{e_C}$ be as in Theorem \ref{main}. For every
$t,t'\in T_{e_C}$, let $$E_{tt'}=t^{-1}\beta_{e_C}t'.$$ Then
$\{E_{tt'} : t,t'\in T_{e_C}\}$ gives a complete set of matrix
units in $\F G{e_C}$, i.e. $e_C=\sum_{t\in T_{e_C}}E_{tt}$ and
$E_{t_1t_2}E_{t_3t_4}=\delta_{t_2t_3}E_{t_1t_4}$, for every
$t_1,t_2,t_3,t_4\in T_{e_C}$. Moreover $E_{tt}\F GE_{tt}\simeq
\F_{q^{mo/[E:K]}}$, where $o$ is the multiplicative order of $q^m$
modulo $[H:K]$.
\end{corollary}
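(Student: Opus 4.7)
My plan is to derive the corollary essentially formally from Theorem \ref{main} and Theorem \ref{mainfinite}, with only a short computation needed to verify the matrix-unit relations.

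First, I would note that by Theorem \ref{main}, the set $\{t^{-1}\beta_{e_C}t : t\in T_{e_C}\}$ is a complete set of orthogonal primitive idempotents of $\F Ge_C$. In particular, $1\in T_{e_C}$ (since $1$ lies in $T_{2'}$, $T_2$, and $T_E$), so $\beta_{e_C}=E_{11}$ is itself one of these primitive idempotents, and $\sum_{t\in T_{e_C}}E_{tt}=\sum_{t\in T_{e_C}}t^{-1}\beta_{e_C}t=e_C$. This handles the completeness relation.

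Next, for the multiplication rule, I would compute
\[
E_{t_1t_2}E_{t_3t_4}=t_1^{-1}\beta_{e_C}\,(t_2t_3^{-1})\,\beta_{e_C}t_4,
\]
and split on whether $t_2=t_3$. If $t_2=t_3$, then $t_2t_3^{-1}=1$ and the middle factor becomes $\beta_{e_C}^2=\beta_{e_C}$ (idempotency), yielding $E_{t_1t_4}$. If $t_2\neq t_3$, I would use that $t_2^{-1}\beta_{e_C}t_2$ and $t_3^{-1}\beta_{e_C}t_3$ are distinct elements in a set of orthogonal idempotents, hence their product is zero; conjugating this relation by $t_2$ on the left and $t_3^{-1}$ on the right gives $\beta_{e_C}(t_2t_3^{-1})\beta_{e_C}=0$, and therefore $E_{t_1t_2}E_{t_3t_4}=0$. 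Together these cases give the matrix-unit identities $E_{t_1t_2}E_{t_3t_4}=\delta_{t_2t_3}E_{t_1t_4}$.

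Finally, for the identification of the corner algebra, I would appeal to Theorem \ref{mainfinite}, which gives $\F Ge_C\simeq M_{[G:H]}(\F_{q^{mo/[E:K]}})$. Since $E_{tt}=t^{-1}\beta_{e_C}t$ is a primitive idempotent of this simple algebra, the corner $E_{tt}\F GE_{tt}$ is a division ring Morita-equivalent to a matrix block on a single rank-one idempotent, hence isomorphic to $\F_{q^{mo/[E:K]}}$.

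I do not expect a serious obstacle here: the only non-trivial input is the orthogonality of distinct conjugates $t^{-1}\beta_{e_C}t$, which is precisely what Theorem \ref{main} provides, and the conjugation trick to pass from orthogonality of the primitive idempotents to the vanishing $\beta_{e_C}(t_2t_3^{-1})\beta_{e_C}=0$ is the one small manipulation that makes the whole argument work.
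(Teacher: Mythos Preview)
Your argument is correct. The paper itself does not give a proof of this corollary; it only remarks that the result is obtained ``similar to \cite[Corollary 4.10]{Jespers2010}'', and your derivation---using the orthogonality and completeness from Theorem \ref{main} together with the conjugation/multiplication trick to obtain $\beta_{e_C}(t_2t_3^{-1})\beta_{e_C}=0$, and Theorem \ref{mainfinite} for the identification of the corner---is exactly the standard verification one would expect and matches that cited argument.
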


\section*{Acknowledgements}
The authors would like to thank the referees for pointing out a
gap in the previous submission and for various suggestions. They
also thank Eric Jespers and \'Angel del R\'io for helpful
discussions.

Research partially supported by the grant PN-II-RU-TE-2009-1
project ID\_303.

\bibliographystyle{amsalpha}
\bibliography{references}

\newcommand{\etalchar}[1]{$^{#1}$}
\providecommand{\bysame}{\leavevmode\hbox to3em{\hrulefill}\thinspace}
\providecommand{\MR}{\relax\ifhmode\unskip\space\fi MR }
% \MRhref is called by the amsart/book/proc definition of \MR.
\providecommand{\MRhref}[2]{%
  \href{http://www.ams.org/mathscinet-getitem?mr=#1}{#2}
}
\providecommand{\href}[2]{#2}
\begin{thebibliography}{CKO{\etalchar{+}}09}

\bibitem[BdR07]{Broche2007}
Osnel Broche and {\'A}ngel del R\'io, \emph{Wedderburn decomposition of finite
  group algebras}, Finite Fields and Their Applications \textbf{13} (2007),
  no.~1, 71 -- 79.

\bibitem[Cha82]{charpin}
Pascale Charpin, \emph{The {R}eed-{S}olomon code as ideals of a modular
  algebra}, Comptes Rendus de l'Académie des sciences, S\'erie I,
  Math\'ematique \textbf{294} (1982), no.~17, 597--600.

\bibitem[CKO{\etalchar{+}}09]{wedderga}
Osnel~Broche Cristo, Alexander Konovalov, Gabriela Olteanu, Aurora Olivieri,
  and {\'A}ngel del R\'io, \emph{{Wedderga - a GAP package}}, Version 4.3.3,
  29/05/2009, {\url{http://www.gap-system.org/Packages/wedderga.html}}.

\bibitem[Coh93]{Cohen1993}
Henri Cohen, \emph{A course in computational algebraic number theory}, Graduate
  Texts in Mathematics, vol. 138, Springer-Verlag, New York, Heidelberg,
  Berlin, 1993.

\bibitem[FPM07]{Ferraz2007}
Raul~Antonio Ferraz and C{\'e}sar Polcino~Milies, \emph{Idempotents in group
  algebras and minimal abelian codes}, Finite Fields and Their Applications
  \textbf{13} (2007), no.~2, 382--393.

\bibitem[HP98]{HB1998}
W.~C. Huffman and Vera~S. Pless, \emph{Handbook of coding theory}, Elsevier
  Science Inc., New York, NY, USA, 1998.

\bibitem[IR90]{Ireland1990}
Kenneth Ireland and Michael Rosen, \emph{A classical introduction to modern
  number theory}, Graduate Texts in Mathematics, vol.~84, Springer, New York,
  1990.

\bibitem[JLP03]{Jespers2003}
Eric Jespers, Guilherme Leal, and Antonio Paques, \emph{Central idempotents in
  the rational group algebra of a finite nilpotent group}, Journal of Algebra
  and its Applications \textbf{2} (2003), no.~1, 57 -- 62.

\bibitem[JOdR10]{Jespers2010}
Eric Jespers, Gabriela Olteanu, and {\'A}ngel del R\'io, \emph{Rational group
  algebras of finite groups: from idempotents to units of integral group
  rings}, to appear in Algebras and Representation Theory, 2010,
  \url{http://arxiv.org/abs/1001.1236}.

\bibitem[OdRS04]{Olivieri2004}
Aurora Olivieri, {\'A}ngel del R\'io, and Juan~Jacobo Sim\'on, \emph{On
  monomial characters and central idempotents of rational group algebras},
  Communications in Algebra \textbf{32} (2004), no.~4, 1531 -- 1550.

\bibitem[OdRS06]{Olivieri2006}
\bysame, \emph{The group of automorphisms of the rational group algebra of a
  finite metacyclic group}, Communications in Algebra \textbf{34} (2006),
  no.~10, 3543 -- 3567.

\bibitem[Pas77]{Passman1977}
Donald~S. Passman, \emph{The algebraic structure of group rings}, John Wiley
  and Sons, New York, 1977.

\bibitem[Seh78]{Sehgal1978}
Sudarshan~K. Sehgal, \emph{Topics in group rings}, Marcel Dekker, New York,
  1978.

\bibitem[Seh93]{Sehgal1993}
\bysame, \emph{Units in integral group rings}, Longman Scientific and
  Technical, 1993.

\bibitem[SL95]{sabin}
Roberta~Evans Sabin and Samuel~J. Lomonaco, \emph{Metacyclic error-correcting
  codes}, Applicable Algebra in Engineering, Communication and Computing
  \textbf{6} (1995), no.~3, 191--210.

\bibitem[{The}08]{GAP}
{The GAP~Group}, \emph{{GAP -- Groups, Algorithms, and Programming, Version
  4.4.12}}, 2008, {\url{http://www.gap-system.org}}.

\bibitem[Yam73]{Yamada1973}
T.~Yamada, \emph{The {S}chur subgroup of the {B}rauer group}, Lecture Notes in
  Math, vol. 397, Springer-Verlag, 1973.

\end{thebibliography}

\end{document}